\newcounter{lemma}
\newtheorem{Lemma}[lemma]{Lemma}
\newtheorem{Proposition}[lemma]{Proposition}
\theoremstyle{definition}
\def\C{\mathbb C}
\def\H{\mathbb H}
\def\O{\mathcal O}
\def\Q{\mathbb Q}
\def\R{\mathbb R}
\def\Z{\mathbb Z}
\def\new{\mathrm{new}}
\def\FL#1{\left\lfloor #1\right\rfloor}
\def\SL{\mathrm{SL}}
\def\M#1#2#3#4{\begin{pmatrix}#1&#2\\#3&#4\end{pmatrix}}
\def\SM#1#2#3#4{\left(\begin{smallmatrix}#1&#2\\#3&#4\end{smallmatrix}
  \right)}
\begin{document}

\title{Computing modular equations for Shimura curves}

\author{Yifan Yang}
\address{Department of Applied Mathematics, National Chiao Tung
  University and National Center for Theoretical Sciences, Hsinchu,
  Taiwan 300}  
\email{yfyang@math.nctu.edu.tw}
\date{\today}

\begin{abstract} In the classical setting, the modular equation of
  level $N$ for the modular curve $X_0(1)$ is the polynomial relation
  satisfied by $j(\tau)$ and $j(N\tau)$, where $j(\tau)$ is the
  standard elliptic $j$-function. In this paper, we will describe a
  method to compute modular equations in the setting of Shimura
  curves. The main ingredient is the explicit method for computing
  Hecke operators on the spaces of modular forms on Shimura curves
  developed in \cite{Yang-Schwarzian}.
\end{abstract}

\thanks{The author would like to thank Professor John McKay and
  Professor John Voight for providing valuable comments on the
  paper. The author was partially supported by Grant
  99-2115-M-009-011-MY3 of the National Science Council, Taiwan
  (R.O.C.).}
\subjclass[2000]{Primary 11F03， secondary 11F12, 11G18}
\keywords{Hecke operators, modular equations, Schwarzian differential
  equations, Shimura curves}
\maketitle

\begin{section}{Introduction}
Let $j(\tau)$ be the elliptic $j$-function. For a positive integer
$N$, consider the set
\begin{equation*}
\begin{split}
  \Gamma_N&=\SL(2,\Z)\M100N\SL(2,\Z) \\
  &=\left\{\M abcd:~a,b,c,d\in\Z,~ad-bc=N,~\gcd(a,b,c,d)=1\right\}.
\end{split}
\end{equation*}
The group $\SL(2,\Z)$ acts on $\Gamma_N$ by multiplication on the
left and it can be easily checked that the cardinality of
$\SL(2,\Z)\backslash\Gamma_N$ is finite. Moreover, the multiplication
of any element in $\SL(2,\Z)$ on the right of the cosets in
$\SL(2,\Z)\backslash\Gamma_N$ simply permutes the cosets. Thus, any
symmetric sum of $j(\gamma\tau)$, where $\gamma$ runs over a complete
set of representatives for $\SL(2,\Z)\backslash\Gamma_N$, will be a
modular function on $\SL(2,\Z)$ and therefore can be written as a
rational function of $j(\tau)$. In fact, because the only possible
pole occurs at the cusp, this rational function of $j(\tau)$ is
actually a polynomial. Then the \emph{modular polynomial}
$\Phi_N(x,y)$ of level $N$ is defined to be the polynomial in
$\C[x,y]$ such that
$$
  \Phi_N(x,j(\tau))=\prod_{\gamma\in\SL(2,\Z)\backslash\Gamma_N}
  \left(x-j(\gamma\tau)\right).
$$
In fact, it can be easily proved that this polynomial $\Phi_N(x,y)$
is in $\Z[x,y]$. Since $Y_0(1)$ is the moduli space of isomorphism
classes of elliptic curves over $\C$, the equation $\Phi_N(x,y)=0$ is
the \emph{modular equation} of level $N$ for moduli of elliptic curves
over $\C$. That is, if $E_1$ and $E_2$ are two elliptic curves over $\C$
admitting a cyclic $N$-isogeny between them, then their $j$-invariants
$j(E_1)$ and $j(E_2)$ satisfy $\Phi_N(j(E_1),j(E_2))=0$.

Observe that if the Fourier expansion of $j(\tau)$ is
$q^{-1}+744+196884q+\cdots$, $q=e^{2\pi i\tau}$, then the Fourier
expansion of $j(N\tau)$ is simply $q^{-N}+744+196884q^N+\cdots$. Thus,
in principle, to determine $\Phi_N(x,y)$, one just has to compute
enough Fourier coefficients for $j(\tau)$ and solve a system of linear
equations. Of course, the main difficulty in practice is that the
coefficients are gigantic. On the other hand, because the coefficients
of $\Phi_N(x,y)$ are all integers, one can compute the reduction of
$\Phi_N(x,y)$ modulo $p$ for a suitable number of primes $p$ and then
use the Chinese remainder theorem to recover the coefficients.
See \cite{Lauter} for the current state of the art in the computation
of $\Phi_N(x,y)$.

In this paper, we shall consider modular equations in the settings of
Shimura curves. Let $B$ be an indefinite quaternion algebra of
discriminant $D>1$ over $\Q$ and $\O$ be a maximal order in $B$.
Choose an embedding $\iota:B\hookrightarrow M(2,\R)$ and let
$$
  \Gamma(\O):=\{\iota(\alpha):~\alpha\in\O,
 ~\mathit{N}(\alpha)=1\}
$$
be the image of the norm-one group of $\O$ under $\iota$, where
$\mathit{N}(\alpha)$ denotes the reduced norm of $\alpha\in B$. Then
the Shimura curve $X_0^D(1)$ is defined to be the quotient space
$\Gamma(\O)\backslash\H$. As shown in \cite{Shimura-CM1}, the Shimura
curve $X_0^D(1)$ is the coarse moduli space for isomorphism classes of
abelian surfaces with quaternionic multiplication (QM) by $\O$. Let
$W_D$ denote the group of Atkin-Lehner involutions on $X_0^D(1)$. For
our purpose, we will also consider the quotient curves of $X_0^D(1)$
by subgroups $W$ of $W_D$.

Now assume that $X_0^D(1)$ has genus $0$ and choose a Hauptmodul
$t(\tau)$ for $X_0^D(1)$ so that $t(\tau)$ generates the function
field on $X_0^D(1)$. For a positive integer $N$ relatively prime to
$D$, pick an element $\alpha$ of norm $N$ in $\O$ such that
$\O\cap(\alpha^{-1}\O\alpha)$ is an Eichler order of level $N$.
Then the \emph{modular polynomial} of level $N$ for $X_0^D(1)$ is
defined to be the polynomial $\Phi_N^D(x,y)$ of minimal degree, up to
scalars, such that $\Phi_N^D(t(\tau),t(\iota(\alpha)\tau))=0$, which
is essentially the rational function $\widetilde\Phi^D_N(x,y)$ such
that
$$
  \widetilde\Phi_N^D(x,t(\tau))=\prod_{\gamma\in\Gamma(\O)\backslash
  \Gamma(\O)\iota(\alpha)\Gamma(\O)}(x-t(\gamma\tau)).
$$
Here unlike the case of the modular curve $X_0(1)$, a symmetric sum
of $t(\gamma\tau)$ as $\gamma$ runs through representatives of
$\Gamma(\O)\backslash\Gamma(\O)\iota(\alpha)\Gamma(\O)$ is not
equal to a polynomial of $t(\tau)$ in general.
Since $X_0^D(1)$ is the moduli space of abelian surfaces over $\C$
with quaternionic multiplication, the modular equation of level $N$
relates the moduli of two abelian surfaces with QM that have a certain
type of isogenies between them. (The precise description of the
isogeny is a little complicated to be given here. See \cite[Appendix
A]{Molina1} for details.) Modular equations for Atkin-Lehner quotients
$X_0^D(1)/W$ are similarly defined.

When $D>1$, the problem of explicitly determining modular equations
for Shimura curves is significantly more complicated than its
classical counterpart. The reasons are that Shimura curves do not have
cusps and it is difficult to find the Taylor expansion of an
automorphic function with respect to a local parameter at any given
point on the Shimura curve. When $D$ and $N$ are both small such that
$X_0^D(N)$ is also of genus $0$, it is possible to work out an
explicit cover $X_0^D(N)\to X_0^D(1)$ from the ramification data
alone. Then from the explicit cover, one can compute the modular
equation of level $N$. This has been done in \cite{Elkies} for a
limited number of cases.

Another possible method to compute modular equations for Shimura
curves uses the Schwarzian differential equation associated to a
Hauptmodul $t(\tau)$. (See Section \ref{section: Schwarzian} for a
review on the notion and properties of Schwarzian differential
equation.) The idea is that with a properly chosen pair of solutions
$F_1(t)$ and $F_2(t)$ of the Schwarzian differential equation and a
correct positive integer $e$, the expression $(F_2(t)/F_1(t))^e$ can
be taken to be a local parameter at the point $\tau_0$ of the Shimura
curve with $t(\tau_0)=0$. Inverting the expression, one gets the
Taylor expansion of $t$ with respect to the local parameter. If
somehow one manages to find the Taylor expansion of
$t(\iota(\alpha)\tau)$ (this can always be done when
$t(\iota(\alpha)\tau_0)=0$, then by computing enough terms and
solving a system of linear equations, one gets the modular equation.
However, as far as we know, there does not seem to be any paper in the
literature that employs this idea to obtain modular equations for
Shimura curves. (The paper \cite{Bayer-Travesa} did obtain the Taylor
expansion for the Hauptmodul in the case $D=6$. More recently, Voight
and Willis \cite{Voight-Willis} developed a method for numerically
computing Taylor expansions of automorphic forms on Shimura curves.)

In his Ph.D. thesis \cite{Voight-thesis}, Voight developed a method to
compute modular equations for Shimura curves associated to quaternion
algebras over totally real number fields in the cases when the Shimura
curves have genus zero and precisely three elliptic points, i.e.,
Shimura curves associated to arithmetic triangle groups. The idea
is to use hypergeometric functions, which are essentially solutions of
the Schwarzian differential equations mentioned above, to numerically
determine coordinates of CM-points and then use the existence of
canonical models and explicit Shimura reciprocity laws to determine
modular equations. Even though the equations are obtained numerically,
they can be verified rigorously by showing that the monodromy group of
the branched cover is correct.

In this paper, we shall present a new method to compute modular
equations for Shimura curves. Our method also uses Schwarzian
differential equations, but relies more heavily on the arithmetic side
of the theory. Namely, in an earlier work \cite{Yang-Schwarzian}, we
showed that the spaces of automorphic forms of any given weight on a
Shimura curve $X_0^D(1)$ of genus $0$ can be completely characterized
in terms of solutions of the Schwarzian differential equation. We
\cite{Yang-Schwarzian} then devised a method to compute Hecke
operators with respect to our basis. The Jacquet-Langlands
correspondence plays a crucial role in our approach. It turns out that
our method and results in \cite{Yang-Schwarzian} can also be used to
compute modular equations for Shimura curves. We will describe the
procedure in Section \ref{section: method} and give a detailed example
in Section \ref{section: example}. Using the computer algebra
system MAGMA \cite{Magma}, we have succeeded in determining modular
equations of prime level up to $19$ for $X_0^6(1)/W_6$ and those of
prime level up to $23$ for $X_0^{10}(1)/W_{10}$.


The main difficulty in generalizing our method to general Shimura
curves lies at the fact that our method requires that a Schwarzian
differential equation is known beforehand. Because of the problem of
the existence of accessory parameters, the determination of Schwarzian
differential equations usually requires that an explicit cover
$X_0^D(N)\to X_0^D(1)$ is known, which can be problematic when $D$ is
large. (In some sense, what we do here is to deduce modular
equations of higher levels from that of a given small level.)
Nonetheless, once an explicit cover of Shimura curves is 
determined, the combination of the methods in \cite{Yang-Schwarzian}
and in this paper will yield modular equations for the Shimura curve.

The rest of the paper is organized as follows. In Section
\ref{section: Schwarzian}, we review the definition and properties of
Schwarzian differential equations. In Section \ref{section: method},
we describe our method to compute modular equations for Shimura curves
$X_0^D(1)/W$, assuming that an explicit cover $X_0^D(p_0)/W\to
X_0(1)/W$ is known. In Section \ref{section: example}, we give a
detailed example illustrating our method. In Sections \ref{section: 6}
and \ref{section: 10}, we list part of our computational results for
the Shimura curves $X_0^6(1)/W_6$ and $X_0^{10}(1)/W_{10}$. Files in
the MAGMA-readable format containing all our computational results are
available upon request.
\end{section}

\begin{section}{Schwarzian differential equations}
\label{section: Schwarzian}

In this section, we will review the definition and properties of
Schwarzian differential equations. In particular, assuming the Shimura
curve has genus $0$, we will recall the characterization of the spaces
of automorphic forms in terms of solutions of the associated
Schwarzian differential equation. The method for computing Hecke
operators on these spaces is too complicated to describe here. We
refer the reader to \cite{Yang-Schwarzian} for details. Most
materials in this section are taken from \cite{Yang-Schwarzian}.

Let $X$ be a Shimura curve. We assume that the associated quaternion
algebra is not $M(2,\Q)$ so that $X$ has no cusps. Let $F(\tau)$ be a
(meromorphic) automorphic form of weight $k$ and $t$ be a non-constant
automorphic function on $X$. It is known since the nineteen century
that the functions $F,\tau F,\ldots,\tau^kF$, as functions of $t$, are
solutions of a certain $(k+1)$-st linear ordinary differential
equations with algebraic functions as coefficients. (See
\cite{Stiller,Yang}.) In particular, since $t'(\tau)$ is a meromorphic
automorphic form of weight $2$, the function $t'(\tau)^{1/2}$ as a
function of $t$, satisfies a second-order linear ordinary differential
equation. We call this differential equation the \emph{Schwarzian
  differential equation} associated to $t$, which has the following
properties.

\begin{Proposition}[{\cite[Proposition 6]{Yang-Schwarzian}}]
\label{proposition: Q}
  Let $X$ be a Shimura curve of genus zero with elliptic
  points $\tau_1,\ldots,\tau_r$ of order $e_1,\ldots,e_r$,
  respectively. Let $t(\tau)$ be a Hauptmodul of $X$ and set
  $a_i=t(\tau_i)$, $i=1,\ldots,r$. Then $t'(\tau)^{1/2}$, as a
  function of $t$, satisfies the differential equation
  $$
    \frac{d^2}{dt^2}F+Q(t)F=0,
  $$
  where
  $$
    Q(t)=\frac14\sum_{j=1,a_j\neq\infty}^r\frac{1-1/e_j^2}{(t-a_j)^2}
        +\sum_{j=1,a_j\neq\infty}^r\frac{B_j}{t-a_j}
  $$
  for some constants $B_j$. Moreover, if $a_j\neq\infty$ for all $j$,
  then the constants $B_j$ satisfy
  $$
  \sum_{j=1}^r B_j=
  \sum_{j=1}^r\left(a_jB_j+\frac14(1-1/e_j^2)\right)=
  \sum_{j=1}^r\left(a_j^2B_j+\frac12a_j(1-1/e_j^2)\right)=0.
  $$
  Also, if $a_r=\infty$, then $B_j$ satisfy
  $$
  \sum_{j=1}^{r-1}B_j=0, \qquad
  \sum_{j=1}^{r-1}\left(a_jB_j+\frac14(1-1/e_j^2)\right)=\frac14(1-1/e_r^2).
  $$
\end{Proposition}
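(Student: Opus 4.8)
The plan is to identify $Q$ with half the Schwarzian derivative of the inverse uniformizing map and then to read off its analytic structure on $X\cong\P^1$. Since the equation $F''+Q(t)F=0$ is in normal form (no first-derivative term), Abel's identity forces the Wronskian of any two solutions to be constant; consequently, if $F_1=t'(\tau)^{1/2}$ and $F_2=\tau F_1$ are the two solutions provided by the discussion above, the standard theory of second-order linear equations gives $\{\tau,t\}=2Q(t)$, where $\{\tau,t\}=\tau'''/\tau'-\frac32(\tau''/\tau')^2$ is the Schwarzian derivative of the (locally defined) inverse function $\tau=\tau(t)$ with respect to $t$. Thus the whole problem reduces to determining $\{\tau,t\}$ as a function of $t$.

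First I would observe that $\tau(t)$ is multivalued, but its branches differ only by the action of the Fuchsian group $\Gamma\subset\mathrm{PSL}(2,\R)$ uniformizing $X$, i.e. by Möbius transformations in the $\tau$-variable. Since the Schwarzian derivative is invariant under post-composition with Möbius transformations, $\{\tau,t\}$ is single-valued away from the elliptic points; as $X$ has genus $0$ with Hauptmodul $t$, it is therefore a rational function of $t$, and it remains only to pin down its poles.

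The key local computation is at an elliptic point $\tau_j$ of order $e_j$. Choosing a local coordinate $u$ on $\H$ vanishing at $\tau_j$, the quotient by the stabilizer shows $t-a_j=cu^{e_j}(1+O(u))$, whence $u=(t-a_j)^{1/e_j}\cdot(\text{unit})$; since $\tau$ is a Möbius function of $u$, invariance gives $\{\tau,t\}=\{u,t\}$. A direct computation of $\{(t-a_j)^{1/e_j},t\}=\tfrac{1-1/e_j^2}{2}(t-a_j)^{-2}$, together with the fact that the holomorphic unit factor perturbs only the lower-order singular terms, shows that near $a_j$
\[
  \{\tau,t\}=\frac{1-1/e_j^2}{2(t-a_j)^2}+\frac{2B_j}{t-a_j}+O(1)
\]
for some constant $B_j$. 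At every ordinary point $\tau$ is locally biholomorphic, so $\{\tau,t\}$ is regular there. Dividing by $2$ yields precisely the claimed shape of $Q$, with the double-pole coefficients forced and the residues $B_j$ remaining as free accessory parameters.

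The relations among the $B_j$ come from the behavior at $t=\infty$, and this is the step I expect to require the most care. Passing to $s=1/t$ and using that $s=1/t$ is itself Möbius (so $\{s,t\}=0$) gives $\{\tau,t\}=\{\tau,s\}\,s^4$. When every $a_j$ is finite, $t=\infty$ is an ordinary point, $\{\tau,s\}$ is bounded at $s=0$, and hence $Q(t)=\frac12\{\tau,t\}=O(t^{-4})$; expanding the partial-fraction form of $Q$ in powers of $1/t$ and forcing the coefficients of $t^{-1}$, $t^{-2}$, $t^{-3}$ to vanish gives exactly the three stated identities. When $a_r=\infty$ is an elliptic point of order $e_r$, the same change of variable gives $\{\tau,s\}\sim\tfrac{1-1/e_r^2}{2}s^{-2}$, hence $Q(t)\sim\tfrac{1-1/e_r^2}{4}t^{-2}$, and matching the $t^{-1}$ and $t^{-2}$ coefficients yields the two remaining identities. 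The only subtle points are verifying that no spurious polynomial part survives in the rational function $\{\tau,t\}$ and that the unit factor in $u$ genuinely contributes nothing to the double-pole coefficient; both follow from the local normal form of the uniformization at an elliptic point together with the growth constraint at infinity just described.
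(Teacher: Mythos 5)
Your argument is correct, but note first that this paper does not prove the proposition at all: it is quoted from \cite[Proposition 6]{Yang-Schwarzian}, so there is no in-paper proof to compare against, and what you have reconstructed is the classical uniformization argument that underlies the cited result. All the substantive steps check out: the identification $Q=\tfrac12\{\tau,t\}$; the single-valuedness (hence rationality) of $\{\tau,t\}$ via invariance of the Schwarzian under post-composition with the deck transformations $\gamma\in\Gamma$; the local computation $\{(t-a_j)^{1/e_j},t\}=\tfrac{1-1/e_j^2}{2}(t-a_j)^{-2}$, with the unit factor shifting only the residue; and the analysis at $t=\infty$, where an ordinary point forces $Q(t)=O(t^{-4})$ (killing any polynomial part and yielding the three relations), while an elliptic point of order $e_r$ at infinity forces $Q(t)=\tfrac14(1-1/e_r^2)t^{-2}+O(t^{-3})$ (yielding the remaining two). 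The one step you state backwards is the reduction to normal form: you argue that \emph{because} the equation is in normal form, Abel's identity makes the Wronskian constant, whereas what is needed is the converse, namely that the second-order equation satisfied by $F_1=t'(\tau)^{1/2}$ and $F_2=\tau F_1$ (whose existence is granted by the discussion preceding the proposition) has no first-derivative term. This is immediate to repair: compute $F_1F_2'-F_2F_1'=\frac{d\tau}{dt}\,F_1^2=\frac{d\tau}{dt}\cdot\frac{dt}{d\tau}=1$, a constant, so by Abel's formula the coefficient of $F'$ vanishes, and then $Q=-F_1''/F_1=\tfrac12\{\tau,t\}$ as you use. With that reordering the proof is complete; it is the same circle of ideas (local exponents $\tfrac12(1\pm 1/e_j)$ at the elliptic points plus the growth constraint at infinity) as in the reference, merely organized around the Schwarzian derivative rather than around the indicial equations of the Fuchsian ODE.
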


We remark that when the Shimura curve has genus $0$ and precisely $3$
elliptic points, the relations among $B_j$ are enough to determine the
constants $B_j$. This reflects the fact in classical analysis that a
second-order Fuchsian differential equation with exactly three
singularities is completely determined by the local exponents. When
the Shimura curve has more than $3$ elliptic points, the relations are
not enough to determine $B_j$. In literature, we refer to this kind of
situations by saying that \emph{accessory parameters} exist. In order
to determine the accessory parameters, one usually tries to find an
explicit cover of Shimura curves and use it to determine the
Schwarzian differential equations associated to the two curves
simultaneously.

Now one of the key observations in \cite{Yang-Schwarzian} is that the
analytic behavior of the Hauptmodul $t'(\tau)$ is very easy to
determine and from this, one can work out a basis for the space of
automorphic forms of even weight $k$ in terms of $t'(\tau)$.

\begin{Proposition}[{\cite[Theorem 4]{Yang-Schwarzian}}]
\label{proposition: basis}
  Assume that a Shimura curve $X$ has genus zero with elliptic
  points $\tau_1,\ldots,\tau_r$ of order $e_1,\ldots,e_r$,
  respectively. Let $t(\tau)$ be a Hauptmodul of $X$ and set
  $a_i=t(\tau_i)$, $i=1,\ldots,r$. For a positive even integer $k\ge 4$,
  let
  $$
    d_k=\dim S_k(X)=1-k+\sum_{j=1}^r\left\lfloor
    \frac k2\left(1-\frac1{e_j}\right)\right\rfloor.
  $$
  Then a basis for the space of automorphic forms of weight $k$ on $X$ is
  $$
    t'(\tau)^{k/2}t(\tau)^j\prod_{i=1,a_i\neq\infty}^r
    \left(t(\tau)-a_i\right)^{-\lfloor k(1-1/e_i)/2\rfloor}, \quad
    j=0,\ldots,d_k-1.
  $$
\end{Proposition}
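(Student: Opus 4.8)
The plan is to realize every weight-$k$ automorphic form as a rational function of $t$ times the single meromorphic form $t'(\tau)^{k/2}$, and then to translate holomorphy into a divisor condition on that rational function. First I would record that $t'(\tau)=dt/d\tau$ is a meromorphic weight-$2$ form: differentiating $t(\gamma\tau)=t(\tau)$ for $\gamma=\SM abcd\in\Gamma(\O)$ gives $t'(\gamma\tau)=(c\tau+d)^2t'(\tau)$. Since $k$ is even, $t'^{k/2}$ is a meromorphic weight-$k$ form, and for any meromorphic weight-$k$ form $F$ the quotient $F/t'^{k/2}$ is a meromorphic weight-$0$ form, i.e.\ a meromorphic function on $X$; as $t$ is a Hauptmodul this quotient equals $g(t)$ for a unique $g\in\C(t)$. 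Thus every such $F$ is $g(t)\,t'^{k/2}$, and the problem becomes determining which $g$ make $F$ holomorphic on all of $\H$ (there being no cusps to worry about).

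Next I would compute $\ord_P(F)$ for $F=g(t)\,t'^{k/2}$ at each $P\in\H$, organized by the image $a=t(P)\in\P^1$, the key input being the ramification of the quotient map $\H\to X$. At a non-elliptic point with finite $a$, the coordinate $t-a$ is a local parameter on $\H$ and $t'$ is finite and nonzero, so $\ord_P(F)=\ord_a(g)$ and holomorphy forces $\ord_a(g)\ge0$. At a finite elliptic point $\tau_i$ of order $e_i$ one has $t-a_i=c\,w^{e_i}+\cdots$ for a local parameter $w$ on $\H$ vanishing to first order at $\tau_i$; differentiating yields $\ord_{\tau_i}(t')=e_i-1$, while a zero or pole of $g$ of order $n_i$ in $t$ becomes order $e_i n_i$ in $\tau$. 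Hence $\ord_{\tau_i}(F)=e_i n_i+\frac k2(e_i-1)$, and holomorphy forces $n_i\ge-\FL{\frac k2(1-1/e_i)}$. A parallel computation at the unique pole of $t$ — whether or not it is an elliptic point — bounds $\ord_\infty(g)$ from below.

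Assembling these local conditions, the admissible $g$ form exactly the Riemann--Roch space $L(D)$ on $X\cong\P^1$ for an explicit divisor $D$ supported on the $a_i$ and the pole of $t$; when $\infty$ is not an elliptic point one finds $D=\sum_{a_i\neq\infty}\FL{\frac k2(1-1/e_i)}\,(a_i)-k\,(\infty)$, of degree $d_k-1$. Since $\deg D\ge0$, Riemann--Roch on $\P^1$ gives $\dim L(D)=\deg D+1=d_k$, which simultaneously verifies the stated dimension formula. Finally I would check that the proposed functions $g_j=t^j\prod_{a_i\neq\infty}(t-a_i)^{-\FL{\frac k2(1-1/e_i)}}$ lie in $L(D)$ precisely for $j=0,\ldots,d_k-1$ (the constraint at $\infty$ reads exactly $\ord_\infty(g_j)=\sum_i\FL{\frac k2(1-1/e_i)}-j\ge k$, i.e.\ $j\le d_k-1$) and are linearly independent because they have distinct degrees in $t$; being $d_k$ independent elements of a $d_k$-dimensional space, they form a basis.

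The main obstacle is the local analysis at the elliptic points and at the pole of $t$: one must track carefully how orders on $\H$ relate to orders in the coordinate $t$ through the $e_i$-fold branching, obtain the exponent $e_i-1$ for $t'$ correctly, and verify that the floor functions emerge as claimed — in particular handling the case $a_r=\infty$, where the pole of $t$ is itself an elliptic point, so that the bound at $\infty$ changes but the count still matches $d_k$. Everything downstream is then a routine Riemann--Roch computation on $\P^1$.
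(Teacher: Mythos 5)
Your proposal is correct and is essentially the proof of this result: the paper itself does not prove the proposition (it imports it as \cite[Theorem 4]{Yang-Schwarzian}), and the argument given there is exactly your route---write any meromorphic weight-$k$ form as $g(t)\,t'(\tau)^{k/2}$ with $g\in\C(t)$, convert holomorphy into order bounds on $g$ via the $e_i$-fold ramification at elliptic points (where $\ord(t')=e_i-1$) and at the pole of $t$, and finish with Riemann--Roch on $\P^1$, including the modified bound when $a_r=\infty$. One harmless slip: when $d_k=0$ (e.g.\ $k=4$ for $X_0^6(1)/W_6$) your divisor has $\deg D=-1$, so you should use $\dim L(D)=\deg D+1$ valid for $\deg D\ge-1$ on $\P^1$ (or note the statement is vacuous there), rather than asserting $\deg D\ge0$.
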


The combination of two propositions shows that all automorphic forms
of a given even weight $k$ can be expressed in terms of the solutions
of the Schwarzian differential equation. In \cite{Yang-Schwarzian},
the author developed a method for computing Hecke operators relative
to the basis in Proposition \ref{proposition: basis}. The key
ingredients are the Jacquet-Langlands correspondence and explicit
covers of Shimura curves. We refer the reader to
\cite{Yang-Schwarzian} for details.
\end{section}

\begin{section}{Computing modular equations for Shimura curves}
\label{section: method}
In this section, we will present a method for computing modular
equations for Shimura curves, under the working assumptions that the
Schwarzian differential equations have been determined and, for a
certain prime fixed $p_0$, the matrices for the Hecke operator $T_{p_0}$
with respect to the bases in Proposition \ref{proposition: basis} have
already been computed for sufficiently many $k$ according to the
recipe in \cite{Yang-Schwarzian}.

Let $X$ be a Shimura curve of genus $0$ of the form $X_0^D(1)/W$ for
some subgroup $W$ of the group of Atkin-Lehner involutions and
$\Gamma$ be the discrete subgroup of $\SL(2,\R)$ corresponding to
$X$. For a prime $p$ not dividing $D$, let
$\gamma_j=\SM{a_j}{b_j}{c_j}{d_j}\in\mathrm{GL}^+(2,\R)$,
$j=0,\ldots,p$, be representatives for the cosets defining the Hecke
operator $T_p$, that is,
\begin{equation} \label{equation: Hecke}
  T_p:f\to p^{k/2-1}\sum_{j=0}^p
  \frac{(\det\gamma_j)^{k/2}}{(c_j\tau+d_j)^k}f(\gamma_j\tau).
\end{equation}
Then the modular equation $\Phi_p(x,y)=0$ of level $p$ is the
polynomial relation between a Hauptmodul $t(\tau)$ for $X$ and
$t(\gamma_j\tau)$ for arbitrary $j$.

Pick any nonzero automorphic form $F(\tau)$ on $X$ with the smallest
possible weight $k$. For convenience, we let
\begin{equation} \label{equation: Fj}
  F_j(\tau)=\frac{(\det\gamma_j)^{k/2}}{(c_j\tau+d_j)^k}F(\gamma_j\tau),
\end{equation}
the summands in \eqref{equation: Hecke}. Then any symmetric sum of
$F_j(\tau)/F(\tau)$, $j=0,\ldots,p$, will be an automorphic function
on $X$ and hence can be expressed as a rational function of
$t(\tau)$. In other words, there is a rational function
$\Psi(z,x)$ such that
\begin{equation} \label{equation: Psi}
  \Psi(z,t(\tau))=\prod_{j=0}^p
  \left(z-\frac{F_j(\tau)}{F(\tau)}\right).
\end{equation}
That is, we have
\begin{equation} \label{equation: sketch 1}
  \Psi\left(\frac{F_j(\tau)}{F(\tau)},t(\tau)\right)=0
\end{equation}
for all $j$.

\begin{Lemma} Let $\Psi(z,x)$ be the rational function defined by
  \eqref{equation: Psi}. Let $\Psi'(z,y)$ be the rational function
  such that
  $$
    \Psi'(z,y)=z^{p+1}\Psi(1/z,y),
  $$
  which is a polynomial in $z$. Let $R(x,y)$ be the resultant of
  $\Psi(z,x)$ and $\Psi'(z,y)$ with respect to the variable $z$.
  Then the modular polynomial $\Phi_p(x,y)$ appears as one of the
  irreducible factors over $\C$ in the numerator of the rational
  function $R(x,y)$.
\end{Lemma}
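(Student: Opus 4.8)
The plan is to read the vanishing of $R(x,y)$ as a statement about common roots and then verify it holds identically along the modular curve. Away from the loci where the leading coefficients degenerate, $R(x_0,y_0)=0$ precisely when $\Psi(z,x_0)$ and $\Psi'(z,y_0)$ share a root in $z$. By \eqref{equation: Psi}, for $x_0=t(\tau)$ the roots of $\Psi(z,x_0)$ are the values $F_i(\tau)/F(\tau)$, $i=0,\ldots,p$; and since $\Psi'(z,y)=z^{p+1}\Psi(1/z,y)$, for $y_0=t(\sigma)$ the roots of $\Psi'(z,y_0)$ are the reciprocals $F(\sigma)/F_j(\sigma)$, $j=0,\ldots,p$. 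The first step is therefore to record that $R(t(\tau),t(\sigma))=0$ if and only if $F_i(\tau)/F(\tau)=F(\sigma)/F_j(\sigma)$ for some $i,j$. I would also note at the outset that $R\not\equiv 0$: for generic independent $x,y$ the two root sets are disjoint, so $\Psi$ and $\Psi'$ have no common factor in $\C(x,y)[z]$ and the resultant is a nonzero rational function.

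The heart of the argument is to show that $R(t(\tau),t(\gamma_i\tau))=0$ for every $i$ and every $\tau$, i.e. that $R$ vanishes all along the modular curve. Fix $i$ and set $\sigma=\gamma_i\tau$, so $y=t(\gamma_i\tau)$. I claim that the value $z_0=F_i(\tau)/F(\tau)$, which is a root of $\Psi(z,t(\tau))$, is also a root of $\Psi'(z,t(\gamma_i\tau))$. This is where the arithmetic of the Hecke double coset enters. Choosing representatives $\gamma_\ell=\iota(\beta_\ell)$ with $\beta_\ell\in\O$ of reduced norm $p$, the element $p\gamma_i^{-1}=\iota(\overline{\beta_i})$ again has reduced norm $p$ and lies in the same double coset $\Gamma\iota(\alpha)\Gamma$ by the self-duality of $T_p$; hence $p\gamma_i^{-1}\in\Gamma\gamma_j$ for a unique $j$, and consequently $\gamma_j\gamma_i\in p\Gamma$. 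Writing $\gamma_j\gamma_i=p\delta$ with $\delta=\SM{a_\delta}{b_\delta}{c_\delta}{d_\delta}\in\Gamma$, the cocycle identity $(c_i\tau+d_i)(c_j\sigma+d_j)=p(c_\delta\tau+d_\delta)$ together with the weight-$k$ automorphy of $F$ gives
$$
  \frac{F(\gamma_i\tau)}{F_j(\gamma_i\tau)}
  =\frac{(\det\gamma_i)^{k/2}}{(c_i\tau+d_i)^k}\,\frac{F(\gamma_i\tau)}{F(\tau)}
  =\frac{F_i(\tau)}{F(\tau)}=z_0,
$$
so $z_0$ is indeed a common root and $R(t(\tau),t(\gamma_i\tau))=0$.

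Finally, I would convert this into the divisibility statement. The previous step shows that the numerator of $R(x,y)$ vanishes on the image of the map $\tau\mapsto(t(\tau),t(\gamma_i\tau))$, which is a Zariski-dense subset of the curve $\Phi_p(x,y)=0$. Since $\Phi_p$ is by definition the minimal, hence irreducible, polynomial relation between $t(\tau)$ and $t(\gamma_i\tau)$, and since $R\not\equiv 0$, it follows that $\Phi_p(x,y)$ divides the numerator of $R(x,y)$; being irreducible over $\C$, it appears as one of its irreducible factors.

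I expect the delicate points to be twofold. The first is the bookkeeping that secures the pairing $i\mapsto j$ with $\gamma_j\gamma_i\in p\Gamma$ — essentially the self-adjointness of $T_p$, expressed through $\overline{\beta}\,\beta=\mathit{N}(\beta)=p$ — together with the matching of automorphy factors that yields the displayed chain of equalities. The second, more technical, is ensuring that $\Phi_p$ survives in the \emph{numerator} of $R$ rather than being absorbed into its denominator; this requires checking that $R$ is regular and does not vanish identically at the generic point of the modular curve, so that the geometric vanishing detected above genuinely forces the factor $\Phi_p$ into the numerator.
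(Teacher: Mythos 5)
Your proof is correct and follows essentially the same route as the paper: you pair each coset representative $\gamma_i$ with the $\gamma_j$ satisfying $\gamma_j\gamma_i\in p\Gamma$ (the paper's $\gamma_j'=(\det\gamma_j)\gamma_j^{-1}\in\Gamma\gamma_{j'}$), use the cocycle identity and the weight-$k$ automorphy of $F$ to exhibit $F_i(\tau)/F(\tau)$ as a common root of $\Psi(z,t(\tau))$ and $\Psi'(z,t(\gamma_i\tau))$, and conclude that the resultant vanishes identically along the modular curve, forcing $\Phi_p(x,y)$ to appear among the irreducible factors of the numerator of $R(x,y)$. Your extra remarks — that $R\not\equiv 0$ and that Zariski density of the image of $\tau\mapsto(t(\tau),t(\gamma_i\tau))$ yields the divisibility — merely make explicit details the paper leaves implicit, not a different argument.
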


\begin{proof} Let $\gamma_j'=(\det\gamma_j)\gamma_j^{-1}$. There is
an integer $j'$, $0\le j'\le p$, such that
$\gamma_j'\in\Gamma\gamma_{j'}$. We apply the action of $\gamma_j'$ to
\eqref{equation: sketch 1} and get
$$
  \Psi\left(\frac{F(\tau)}{F_{j'}(\tau)},t(\gamma_{j'}\tau)\right)=0.
$$
In other words, we have
$$
  \Psi'\left(\frac{F_{j'}(\tau)}{F(\tau)},t(\gamma_{j'}\tau)\right)=0
$$
From this equality and \eqref{equation: sketch 1}, we see that if we
let $R(x,y)$ be the resultant of $\Psi(z,x)$ and $\Psi'(z,y)$ with
respect to the variable $z$, then $R(t(\tau),t(\gamma_{j'}\tau))=0$.
In particular, the modular polynomial $\Phi_p(x,y)$ appears as an
irreducible factor of $R(x,y)$ over $\C$. This proves the lemma.
\end{proof}

We remark that, in general, there will be more than one irreducible
factors in the numerator of the resultant $R(x,y)$, but it is not
difficult to determine which one corresponds to $\Phi_p(x,y)$. For
example, we can use the fact that the roots of $\Phi_p(x,x)$ are the
coordinates of certain CM-points (also known as \emph{singular
  moduli}) to test which factor of $R(x,y)$ is $\Phi_p(x,y)$. Thus,
from the above discussion, we see that the most critical part of the
calculation is the determination of the rational function $\Psi(z,x)$,
which we address now.

From Newton's identity, we know that the problem of determining the
symmetric sums of $F_j(\tau)/F(\tau)$ in \eqref{equation: Psi} is
equivalent to that of determining $(F_j/F)^m$ for $m=1,\ldots,p+1$.
Observe that the sum of $F_j^m$ is equal to $p^{1-km/2}T_pF$, by the
definition of the Hecke operator $T_p$. Thus, to determine
$(F_j/F)^m$, one just needs to know how the Hecke operator $T_p$ acts
on the basis given in Proposition \ref{proposition: basis}. This
is where the work of \cite{Yang-Schwarzian} comes into play.

In \cite{Yang-Schwarzian}, we developed a method to compute $T_{p_0}$,
$p_0$ a prime not dividing $D$, for arbitrary weight $k$ with respect
to the basis given in Proposition \ref{proposition: basis}, assuming
an explicit cover $X_0^D(p_0)/W\to X_0^D(1)/W$ is known. Now recall
the following explicit version of the Jacquet-Langlands
correspondence.

\begin{Proposition}[{\cite{Jacquet-Langlands,Shimizu}}]
  \label{proposition: Jacquet-Langlands}
  Let $D$ be discriminant of an indefinite quaternion algebra over
  $\Q$. Let $N$ be a positive integer relatively prime to $D$. For an
  Eichler order $\O=\O(D,N)$ of level $(D,N)$ and a positive even
  integer, let $S_k(\Gamma(\O))$ denote the space of automorphic forms
  on the Shimura curve $X_0^D(N)$. Then
  $$
    S_k(\Gamma(\O))\simeq
    S_k^{D\text{-\rm{new}}}(DN):=\bigoplus_{d|N}\bigoplus_{m|N/d}
    S_k^\new(\Gamma_0(dD))^{[m]}
  $$
  as Hecke modules. Here
  $$
    S_k^\new(\Gamma_0(dD))^{[m]}=\{f(m\tau):~f(\tau)\in
    S_k^\new(\Gamma_0(dD))\}
  $$
  and $S_k^\new(\Gamma_0(dD))$ denotes the newform subspace of
  cusp forms of weight $k$ on $\Gamma_0(dD)$. In other words, for each
  Hecke eigenform $f(\tau)$ in $S_k^{D\text{-\rm{new}}}(\Gamma_0(DN))$, there
  corresponds a Hecke eigenform $\widetilde f(\tau)$ in
  $S_k(\Gamma(\O))$ that shares the same Hecke eigenvalues. Moreover,
  for a prime divisor $p$ of $D$, if the Atkin-Lehner involution $W_p$
  acts on $f$ by $W_pf=\epsilon_p f$, then
  $$
    W_p\widetilde f=-\epsilon_p\widetilde f.
  $$
\end{Proposition}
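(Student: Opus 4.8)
The plan is to recast both spaces adelically and then reduce the statement to the classical global Jacquet--Langlands correspondence, tracking the level and Atkin--Lehner data through the local components. First I would identify $S_k(\Gamma(\O))$ with the space of automorphic forms on $B^\times(\mathbb A)$ whose finite components are fixed by $\widehat\O^\times=\prod_v\O_v^\times$ and whose archimedean component lies in the weight-$k$ discrete series, and identify $S_k(\Gamma_0(DN))$ with the analogous space on $\mathrm{GL}_2(\mathbb A)$. Since $D>1$ forces $B$ to be a division algebra, the group $B^\times$ has no proper parabolic $\Q$-subgroup, so every automorphic representation occurring here is cuspidal; the global Jacquet--Langlands correspondence then supplies a bijection $\pi'\mapsto\pi$ onto the cuspidal automorphic representations of $\mathrm{GL}_2(\mathbb A)$ that are square-integrable at every place where $B$ ramifies, with $\pi_v\cong\pi'_v$ at the split places. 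The concrete mechanism that makes this a precise isomorphism of Hecke modules is the comparison of the Eichler--Selberg trace formulas for the operators $T_n$ with $\gcd(n,DN)=1$: matching the geometric sides (after matching orbital integrals at the ramified primes) yields $\tr(T_n\mid S_k(\Gamma(\O)))=\tr(T_n\mid S_k^{D\text{-new}}(\Gamma_0(DN)))$ for all such $n$.

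Next I would pin down the image on the $\mathrm{GL}_2$ side by local representation theory at the primes $p\mid D$. Because the Eichler order has level $N$ coprime to $D$, it is maximal at each such $p$, so $\pi'_p$ carries a nonzero $\O_p^\times$-fixed vector; the local Jacquet--Langlands correspondence sends these unramified representations of the division algebra $B_p^\times$ exactly to the unramified twists of the Steinberg representation of $\mathrm{GL}_2(\Q_p)$, which is precisely the condition of being \emph{new of level $p$} (a $\Gamma_0(p)$-fixed but no $\mathrm{GL}_2(\Z_p)$-fixed vector). Hence the image lands in the subspace new at every $p\mid D$, namely $S_k^{D\text{-new}}(\Gamma_0(DN))$. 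At the split primes dividing $N$ the Eichler condition matches the $\Gamma_0$-condition directly, and the extra room in the level-$N$ fixed vectors records exactly the oldform multiplicities captured by the inner sum $\bigoplus_{d\mid N}\bigoplus_{m\mid N/d}S_k^{\new}(\Gamma_0(dD))^{[m]}$; combined with the equality of all Hecke traces, this upgrades the abstract correspondence to the asserted isomorphism of Hecke modules.

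Finally, the Atkin--Lehner claim at $p\mid D$ is a purely local computation. On the quaternionic side $W_p$ is realized by an element of $\O$ of reduced norm $p$ normalizing $\O_p^\times$ (a uniformizer of the local division algebra), while on the $\mathrm{GL}_2$ side $W_p$ acts on the newvector of the Steinberg component $\pi_p$; I would compare the two eigenvalues through the explicit local correspondence, equivalently through the local root numbers, which are known to differ by a sign at a ramified place under Jacquet--Langlands. This produces the relation $W_p\widetilde f=-\epsilon_p\widetilde f$.

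The hard part is genuinely the global Jacquet--Langlands correspondence itself---the trace-formula comparison and the matching of orbital integrals and local characters at the ramified places---which is the deep input and is, appropriately, invoked in the paper as the cited theorems of Jacquet--Langlands and Shimizu rather than reproved. By contrast, the remaining steps---translating the fixed-vector conditions into the new/old decomposition and extracting the Atkin--Lehner sign---are local and essentially bookkeeping, although the sign computation at the primes dividing $D$ still demands care.
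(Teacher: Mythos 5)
The paper does not prove this proposition: it is stated as a quoted theorem, with the citations to Jacquet--Langlands and Shimizu standing in place of an argument, and nothing elsewhere in the paper attempts to justify it. So there is no internal proof to compare you against; your sketch can only be judged as an account of how the cited result is established in the literature, and on that score it is essentially accurate. Your route---adelic reformulation, the global correspondence obtained by comparing trace formulas (the Eichler/Jacquet--Langlands approach; Shimizu's own proof goes instead through theta series, which is why both sources are cited), identification of the image as the $D$-new subspace via the fact that the representations of $B_p^\times$ admitting $\O_p^\times$-fixed vectors are exactly the unramified characters $\chi\circ\mathrm{Nrd}$, whose transfers are the unramified twists of Steinberg, newvector theory at the split primes dividing $N$ to produce the oldform sum $\bigoplus_{d\mid N}\bigoplus_{m\mid N/d}$, and a local computation for the Atkin--Lehner sign---is the standard proof skeleton. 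Two points would need care in a full write-up. First, on the quaternionic side the one-dimensional automorphic representations $\chi\circ\mathrm{Nrd}$ (which exist because $B^\times$ modulo center is compact at infinity after dividing by the arithmetic group) are excluded from the correspondence; they are invisible here only because $k\ge 4$ in the paper's applications, but "every automorphic representation occurring is cuspidal" glosses over them. Second, passing from the trace identities for $T_n$, $\gcd(n,DN)=1$, to an isomorphism of Hecke modules uses linear independence of Hecke eigensystems (strong multiplicity one), which you invoke only implicitly. Your Atkin--Lehner argument is correct and can be made entirely explicit without root numbers: $W_p$ acts on the quaternionic side through a uniformizer of $B_p$ of reduced norm $p$, hence by $\chi(p)$ on $\chi\circ\mathrm{Nrd}$, while the newvector of $\chi\otimes\mathrm{St}$ has eigenvalue $-\chi(p)$, giving exactly the sign reversal $W_p\widetilde f=-\epsilon_p\widetilde f$ asserted in the proposition.
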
 

For the situation under our consideration, we have
\begin{equation} \label{equation: Jacquet-Langlands}
  S_k^{\mathrm{new}}(\Gamma_0(D),W)\simeq S_k(\Gamma),
\end{equation}
where the left-hand side denotes the subspace of the
newform subspace of cusp forms of weight $k$ on $\Gamma_0(D)$
that has eigenvalue $-1$ for all $w_q\in W$, $q$ primes, and the
right-hand side denotes the space of automorphic forms of weight $k$
on $X$. Thus, assuming the Hecke operator $T_{p_0}$ on the 
space $S_k^{\mathrm{new}}(\Gamma_0(D),W)$ has no repeated
eigenvalues, one can obtain the matrices for $T_p$ with respect to the
bases in Proposition \ref{proposition: basis} from those for $T_{p_0}$
and the Fourier coefficients of Hecke eigenforms in
$S_k^{\mathrm{new}}(\Gamma_0(D),W)$.

In summary, to compute the modular equation $\Phi_p(x,y)$, we follow
the following steps, assuming that the Schwarzian differential equation
associated to $X=X_0^D(1)/W$ and an explicit cover $X_0^D(p_0)/W\to
X_0^D(1)/W$ are known for some prime $p_0$ not dividing $D$.
\begin{enumerate}
\item[(a)] Pick a nonzero automorphic form $F$ on $X$ of the smallest
  possible weight $k$, expressed in the form given in Proposition
  \ref{proposition: basis}.
\item[(b)] Compute the matrices for $T_{p_0}$ with respect to the
  basis in Proposition \ref{proposition: basis} for weights
  $k,2k,\ldots,(p+1)k$ using the method in \cite{Yang-Schwarzian}.
\item[(c)] Compute Fourier coefficients of Hecke eigenforms in the
  space $S_k^\new(\Gamma_0(D),W)$ in \eqref{equation:
    Jacquet-Langlands} to the precision of $p$ terms (using MAGMA
  \cite{Magma} or SAGE \cite{Sage}).
\item[(d)] Compute the matrices for $T_p$ with respect to the basis in
  Proposition \ref{proposition: basis} using informations from Steps
  (b) and (c). This gives us the expressions of $\sum_{j=0}^p(F_j/F)^m$
  in terms of $t(\tau)$, where $F_j$ are defined by \eqref{equation:
    Fj}.
\item[(e)] Use Newton's identity to convert expressions for
  $\sum_{j=0}^p(F_j/F)^m$ to those for symmetric sums of $F_j/F$ and
  hence determine the rational function $\Psi(z,x)$ in
  \eqref{equation: Psi}.
\item[(f)] Set $\Psi'(z,y)=z^{p+1}\Psi(1/z,y)$. Compute and factorize
  the resultant $R(x,y)$ of $\Psi(z,x)$ and $\Psi'(z,y)$ with respect
  to the variable $z$.
\item[(h)] Determine which irreducible factor of the numerator of
  $R(x,y)$ is the modular polynomial $\Psi_p(x,y)$ by using the fact
  that the roots of $\Psi_p(x,x)$ are coordinates of some CM-points
  (also known as singular moduli) on $X$.
\end{enumerate}

We now work out an example in details.
\end{section}

\begin{section}{An example}
\label{section: example}
In this section, we will work out the modular equation of level $7$
for the Shimura curve $X=X_0^{10}(1)/W_{10}$. (Note that this case was not
covered in \cite{Elkies}.)

The curve $X$ has $4$ elliptic points of orders $2,2,2,3$, which we
denote by $P_2,P_2',P_2'',P_3$, respectively. These are CM-points of
discriminants $-8$, $-20$, $-40$, and $-3$, respectively. According to
\cite{Elkies}, there is a Hauptmodul $t(\tau)$ that takes values
$\infty$, $2$, and $27$, and $0$, respectively. Using the covering
$X_0^{10}(3)/W_{10}\to X_0^{10}(1)/W_{10}$, we \cite[Equation
(9)]{Yang-Schwarzian} found that the Schwarzian differential equation
associated to $t(\tau)$ is
$$
  \frac{d^2}{dt^2}F+\frac{3t^4-119t^3+3157t^2-7296t+10368}
    {16t^2(t-2)^2(t-27)^2}F=0.
$$
Thus, by Proposition \ref{proposition: Q}, near the point $P_3$, the
$t$-expansion of $t'(\tau)$ is the square of a linear combination of
two solutions 
\begin{equation*}
\begin{split}
  F_1(t)&=t^{1/3}\left(1-\frac{10}{81}t-\frac{18539}{839808}t^2
 -\frac{168605}{25509168}t^3-\frac{107269219465}{46548313473024}t^4
 +\cdots\right), \\
  F_2(t)&=t^{2/3}\left(1-\frac5{81}t-\frac{99095}{5878656}t^2
 -\frac{8353325}{1428513408}t^3-\frac{851170821485}{385081502367744}t^4
 +\cdots\right)
\end{split}
\end{equation*}
of the differential equation above. Thus, by Proposition
\ref{proposition: basis}, a basis for the space of automorphic forms
of even weight $k$ on $X$ is
\begin{equation} \label{equation: 10 basis}
   f_{k,j}=\frac{t^{j-1}\left(F_1(t)-CF_2(t)\right)^{k/2}}
   {t^{\FL{k/3}}(1-t/2)^{\FL{k/4}}(1-t/27)^{\FL{k/4}}}, \quad
   j=1,\ldots, d_k=1-k+3\FL{\frac k4}+\FL{\frac k3},
\end{equation}
for some complex number $C$. In particular, the one-dimensional space
of automorphic forms of weight $4$ on $X$ is spanned by
$$
  F(\tau)=f_{4,1}(t(\tau)).
$$
Let $F_j(\tau)$, $j=0,\ldots,7$, be defined by \eqref{equation: Fj}
with $p=7$. According to the recipe described in Section \ref{section:
  method}, we first need to compute the matrices for $T_3$ with
respect to the basis in \eqref{equation: 10 basis} for weights $4m$,
$m=1,\ldots,8$. This has already been done in
\cite[Appendix D]{Yang-Schwarzian}. We found that the matrices of the
Hecke operators $T_3$ are
$$ \extrarowheight3pt
\begin{array}{c|l} \hline\hline
k & A_k \\ \hline
4 & -8 \\
8 & 28 \\
12& \begin{pmatrix}
  468  & -98 \\
 -1728 & 136 \end{pmatrix} \\
16& \begin{pmatrix}
  1728 &  490 \\
 34560 &-3572 \end{pmatrix} \\
20& \begin{pmatrix}
   -2268 &  -2450 \\
 -328320 &  35992 \end{pmatrix} \\
24& \begin{pmatrix}
  227772 & -272244 &   14406 \\
 -388800 & -258192 &   12250 \\
 2985984 &  711936 & -199556 \end{pmatrix}\\
28& \begin{pmatrix}
     420552 &   949620 &  -72030 \\
    -933120 &  4479732 &  -61250 \\
 -104509440 & 31147200 & -196568 \end{pmatrix} \\
32& \begin{pmatrix}
   29821932 &   -5456052 &   360150 \\
   95084928 &  -48253536 &   306250 \\
 1803534336 & -618444288 & 19290988 \end{pmatrix} \\ \hline\hline
\end{array}
$$
That is, for the integer $k$ in the table, we have
$$
  T_3\begin{pmatrix}f_{k,1}\\ \vdots \\ f_{k,d_k}\end{pmatrix}
 =A_k\begin{pmatrix}f_{k,1}\\ \vdots \\ f_{k,d_k}\end{pmatrix}.
$$

The next informations we need are the Fourier expansions of Hecke
eigenforms in the space $S_k^\new(\Gamma_0(10),-1,-1)$ for
$k=4,8,\ldots,32$. By MAGMA \cite{Magma}, they are
\begin{equation*}
\begin{split}
  &q + 2q^2 - 8q^3 + \cdots - 4q^7 + \cdots, \\
  &q + 2^3q^2 + 28q^3 + \cdots + 104q^7 + \cdots, \\
  &q + 2^5q^2 + aq^3 + \cdots + (-177a + 60500)q^7 + \cdots, \\
  &q + 2^7q^2 + aq^3 + \cdots + (423a - 102460)q^7 + \cdots, \\
  &q + 2^9q^2 + aq^3 + \cdots + (-417a + 48562100)q^7 + \cdots, \\
  &q + 2^{11}q^2 + aq^3 + \cdots +
   \frac1{24}(-a^2 + 156628a + 145233723936)q^7 + \cdots, \\
  &q + 2^{13}q^2 + aq^3 + \cdots + 
   \frac1{84}(11a^2 - 17897672a - 75168751256976)q^7 + \cdots, \\
  &q + 2^{15}q^2 + aq^3 + \cdots + 
   \frac1{216}(a^2 - 14315428a + 407319502919904)q^7 + \cdots,
\end{split}
\end{equation*}
respectively. Here each $a$ is a root of the characteristic polynomial
of $T_3$ for the corresponding weight, and is different at each
occurrence. From these, we deduce that the matrices for the Hecke
operator $T_7$ with respect to the bases in \eqref{equation: 10 basis}
are
$$ \extrarowheight3pt
\begin{array}{c|l} \hline\hline
k & B_k \\ \hline
 4 & -4 \\
 8 & 104 \\
12 & \begin{pmatrix} -22336 & 17346 \\
     305856 & 36428 \end{pmatrix} \\
16 & \begin{pmatrix} 628484 & 207270 \\
    14618880 & -1613416 \end{pmatrix} \\
20 & \begin{pmatrix} 49507856 & 1021650 \\
   136909440 & 33553436 \end{pmatrix} \\
24 & \begin{pmatrix} -826476664 & -2549118572 & 216037178 \\
   -4554273600 & -3184965196 & 546964950 \\
   27509870592 & 52096359168 & 934073672 \end{pmatrix} \\
28 & \begin{pmatrix} -91564144564 & 113245670860 & 5617780910 \\
  438283664640 & 412736094176 & -12502504350 \\
15396166471680 & -2162558865600 & -111965170324 \end{pmatrix} \\
32 & \begin{pmatrix} 4631981436536 & -203996300396 & 50284263050 \\
   -11858411062656 & 12584751782372 & 97180354950 \\
    18304356630528 & 78355740427776 &  4460404162424 \end{pmatrix} \\
\hline\hline
\end{array}
$$
Noticing that $F^m=f_{4m,d_{4m}}$ for any positive integer $m$, we
read from the matrices above that
$$
  \sum_{j=0}^7\frac{F_j}{F}=-\frac47, \qquad
  \sum_{j=0}^7\frac{F_j^2}{F^2}=\frac{104}{7^3}, \qquad
  \sum_{j=0}^7\frac{F_j^3}{F^3}=\frac1{7^5}(305856t^{-1}+36428),
$$
$$
  \sum_{j=0}^7\frac{F_j^4}{F^4}=\frac1{7^7}
  (14618880t^{-1}-1613416), \quad
  \sum_{j=0}^7\frac{F_j^5}{F^5}=\frac1{7^9}
  (136909440t^{-1}+33553436),
$$
\begin{equation*}
\begin{split}
  \sum_{j=0}^7\frac{F_j^6}{F^6}&=\frac1{7^{11}}
  (27509870592t^{-2}+52096359168t^{-1}+934073672), \\
  \sum_{j=0}^7\frac{F_j^7}{F^7}&=\frac1{7^{13}} 
  (15396166471680t^{-2}-2162558865600t^{-1}-111965170324), \\
  \sum_{j=0}^7\frac{F_j^8}{F^8}&=\frac1{7^{15}}
  (18304356630528t^{-2}+78355740427776t^{-1}+4460404162424).
\end{split}
\end{equation*}
Then the rational function $\Psi(z,x)$ in \eqref{equation: Psi} is
equal to
\begin{equation*}
\begin{split}
  \Psi(z,x)&=\frac1{7^{14}x^2}\Big(678223072849x^2z^8
  +387556041628x^2z^7+7909306972x^2z^6 \\
  &\qquad-(527663765132x^2+4114130940864x)z^5 \\
  &\qquad+(46199115214x^2-5360751039168x)z^4 \\
  &\qquad+(72916497220x^2-2228082208128x)z^3 \\
  &\qquad+(90698975500x^2-75419213184x+10905601867776)z^2 \\
  &\qquad+(-72866748500x^2+2516798571840x+9093300682752)z \\
  &\qquad+(13624725625x^2-487484222400x+10905601867776)\Big).
\end{split}
\end{equation*}
We then set $\Psi'(z,y)=z^8\Psi(1/z,y)$ and compute the resultant
$R(x,y)$ of $\Psi(z,x)$ and $\Psi'(z,y)$ with respect to the variable
$z$. The numerator of $R(x,y)$ has two irreducible factors. To
determine which one corresponds to the modular equation $\Phi_7(x,y)$
of level $7$, we use the fact that the roots of $\Phi_7(x,x)$ should
be the coordinates of CM-points of discriminants $-3$, $-20$, $-27$,
$-35$, $-40$, $-52$, $-115$, $-180$, and $-280$, and these coordinates
are all rational numbers. In fact, the coordinates of these CM-points
were given in Table 3 of \cite{Elkies}. Those obtained numerically in
\cite{Elkies} were later verified by Errthum \cite{Errthum} using
Borcherds forms. (In general, if $\Phi_p(x,y)$ is the modular
equation of level $p$ for $X$, then the zeros of the polynomial
$\Phi_p(x,x)$ should be coordinates of CM-points of discriminants of
the form $(s^2-4p)/f^2$, $(4s^2-8p)/f^2$, $(25s^2-20p)/f^2$, and
$(100s^2-40p)/f^2$, subject to the condition that optimal embeddings
of imaginary quadratic order of given discriminant into the maximal
order in the quaternion algebra over $\Q$ of discriminant $10$ exist.)
That is, $\Phi_7(x,x)$ should factor into a product of linear factors
over $\Q$. Indeed, exactly one of the two irreducible factors has this
property. This determines $\Phi_7(x,x)$. The equation of $\Phi_7(x,x)$
is given in Section \ref{section: 10}.
\end{section}

\begin{section}{Modular equations for $X_0^6(1)/W_6$}
\label{section: 6}
In this section, we consider the Shimura curve $X_0^6(1)/W_6$. The
Hauptmodul $t$ is chosen such that it takes values $0$, $1$, and
$\infty$ at the CM-points of discriminant $-24$, $-4$, and $-3$,
respectively. For a prime $p\neq 2,3$, we let $\Phi_p(x,y)$ denote
the modular equation of level $p$ for the Shimura curve
$X_0^6(1)/W_6$. We have computed $\Phi_p(x,y)$ for primes up to $19$,
but because the coefficients are very big, here we only
list the equation for $p=7$. (The equation for $p=5$, with a slight
change of variables, is contained in Appendix A of
\cite{Yang-Schwarzian}.) Files containing equations of other levels
are available upon request.

We note that for $p=7$, an explicit cover $X_0^6(7)/W_6\to
X_0^6(1)/W_6$ have already been determined by Elkies \cite{Elkies}. It
is easier to use this explicit cover to obtain the modular equation.

Write $\Phi_7(x,y)$ as $a_8(x)y^8+\cdots+a_0(x)$. Then
\begin{equation*} \small
  a_0(x)=(262254607552729x^2-121636570723920x+501956755356672)^2,
\end{equation*}
\begin{equation*} \small
\begin{split}
a_1(x)&=-20948043194072943880152567521879704213891529442411744000000x^8\\
& - 972224700022233770983527054177244292949817108052713267466888x^7 \\
& +11273421679418251606098370957430742480477581392613807611381824x^6\\
& +1668909333131109923039277853641492658428121008149001087888384x^5 \\
& -63984186761131298882488446421883340458529413111323886398291968x^4\\
& +44207311496458731717072123962700099989994411322119128098799616x^3\\
& +16376343297097580660696811581414332932110848991772493800800256x^2\\
& -10266396414223579734319217958394551183170348131151337958146048x \\
& - 61535122440667017884769009139405588503624662039298456944640,
\end{split}
\end{equation*}
\begin{equation*} \small
\begin{split}
a_2(x)&=38652996866188686573666243770903500386081949440000000000000x^8\\
& - 754695394662453117523773206823864267539586547407817536000000x^7 \\
& +106920403569574588553159546946652660456141317611471702546206108x^6\\
&-304712233218468979235116193697598666589103401841722665615811904x^5\\
&+760829927783454753461831780276599490081935198148332681198238208x^4\\
&-838656978040090256476182485622824884772400584504768719373500416x^3\\
&+279281854044007197025412087758954247401641295020911801816973312x^2\\
&+16376343297097580660696811581414332932110848991772493800800256x\\
& + 155040070253203796103748431879063330519757479108093010771968,
\end{split}
\end{equation*}
\begin{equation*} \small
\begin{split}
a_3(x)&=-38038344753469448835190136766681303367680000000000000000000x^8\\
& + 5356342063300144660882808541232703156528549197824000000000000x^7\\
& +76321377218916949783840090111683558976051518322915065600000000x^6\\
&-474977723264104825683894637831760410975632622470023372974366136x^5\\
&+167509988385063268988566748828192523079606899212802760635355200x^4\\
&+929561753522872554487774286564540945788659043162410938085107712x^3\\
&-838656978040090256476182485622824884772400584504768719373500416x^2\\
&+ 44207311496458731717072123962700099989994411322119128098799616x \\
& - 100063182095188848153455102416032183737981837866201188925440,
\end{split}
\end{equation*}
\begin{equation*} \small
\begin{split}
a_4(x)&=21056325026580996459284526662615040000000000000000000000000x^8\\
& - 2676503627950279289618572142817040185729024000000000000000000x^7\\
& +74717592843661815288079677324740635068097597899264000000000000x^6\\
& +99843100080788043313984402843762031194115908069343957280000000x^5\\
&-643716270399690372725464450317965610492381617103712363036843130x^4\\
&+167509988385063268988566748828192523079606899212802760635355200x^3\\
&+760829927783454753461831780276599490081935198148332681198238208x^2\\
& - 63984186761131298882488446421883340458529413111323886398291968x \\
& + 127566357194950739797651934018366979046647830016645952241664,
\end{split}
\end{equation*}
\begin{equation*} \small
\begin{split}
a_5(x)&=-6216447485624191385037963264000000000000000000000000000000x^8\\
& - 969897592146461089985813356429049856000000000000000000000000x^7 \\
& - 4716808719470806728440581205159667124740096000000000000000000x^6\\
&+100946931718713025406590060876901549085887873170944000000000000x^5\\
& +99843100080788043313984402843762031194115908069343957280000000x^4\\
&-474977723264104825683894637831760410975632622470023372974366136x^3\\
&-304712233218468979235116193697598666589103401841722665615811904x^2\\
& + 1668909333131109923039277853641492658428121008149001087888384x \\
& - 52279464855900563088298143909713657632323089372536352686080,
\end{split}
\end{equation*}
\begin{equation*} \small
\begin{split}
a_6(x)&=764699349893278334976000000000000000000000000000000000000x^8\\
& - 15347209366323527757160513536000000000000000000000000000000x^7 \\
& - 1253143615853758839349272867028205568000000000000000000000000x^6\\
& - 4716808719470806728440581205159667124740096000000000000000000x^5\\
& +74717592843661815288079677324740635068097597899264000000000000x^4\\
& +76321377218916949783840090111683558976051518322915065600000000x^3\\
&+106920403569574588553159546946652660456141317611471702546206108x^2\\
& + 11273421679418251606098370957430742480477581392613807611381824x \\
& + 42321163987340612844969199376648358822165115192172887352832,
\end{split}
\end{equation*}
\begin{equation*} \small
\begin{split}
a_7(x)&=997738013984972341248000000000000000000000000000000000000x^7\\
& - 15347209366323527757160513536000000000000000000000000000000x^6 \\
& - 969897592146461089985813356429049856000000000000000000000000x^5\\
& - 2676503627950279289618572142817040185729024000000000000000000x^4\\
& + 5356342063300144660882808541232703156528549197824000000000000x^3\\
& - 754695394662453117523773206823864267539586547407817536000000x^2 \\
& - 972224700022233770983527054177244292949817108052713267466888x \\
& - 8775937874145070526339174290824946318421255782225802867520,
\end{split}
\end{equation*}
\begin{equation*} \small
a_8(x)=(3024000000x-4097152081)^6.
\end{equation*}

We next give a short list of irrational singular moduli obtained by
factorizing $\Phi_p(x,x)$ for $p\le 19$. Note that the norms of these
coordinates were already computed in \cite{Errthum}. Our computation
yields their exact values, not just their norms. (Note that all the
rational singular moduli for $X_0^6(1)/W_6$ were numerically
determined in \cite{Elkies} and later verified in \cite{Errthum}.)

$$ \extrarowheight3pt \tiny
\begin{array}{c|l|l} \hline\hline
d & \text{coordinates} & \text{norms} \\ \hline
-264& 26198073x^2 - 53148528x + 42719296 & 2^6 19^2 43^2/3^9 11^3 \\
-276& 1771561x^2 + 328736070x - 206586207 & 3^8 23^1 37^2/11^6 \\
-136& 8703679193x^2 + 9087757328x + 52236012608 & 2^6 13^4 41^2/11^6 17^2 \\
-195& 1048576000000x^2 - 1812440448000x + 884218628241
    & 3^8 13^2 19^2 47^2/2^{26} 5^6 \\
-420& 377149515625x^2 - 1357411709250x + 3732357388761
    & 3^8 23^2 61^2/5^6 17^4 \\
-456& 42761175875209x^2 - 43342656704496x + 31050765799488
    & 2^6 3^9 19^1 67^2/11^6 17^4 \\
-219& 1320903770112x^2 + 125096383466496x - 128030514913249
    & 13^4 23^2 41^2 71^2/2^{26}3^9 \\
\hline\hline
\end{array}
$$
\end{section}

\begin{section}{Modular equations for $X_0^{10}(1)/W_{10}$}
\label{section: 10}
In this section, we consider the Shimura curve
$X_0^{10}(1)/W_{10}$. The Hauptmodul $t$ is chosen such that it takes
values $0$, $2$, $27$, and $\infty$ at the CM-points of discriminant
$-3$, $-20$, $-40$, and $-8$, respectively. For a prime $p\neq 2,5$,
we let $\Phi_p(x,y)$ denote the modular equation of level $p$ for the
Shimura curve $X_0^{10}(1)/W_{10}$. We have computed $\Phi_p(x,y)$ for
primes up to $23$, but here we only list the equations for
$p=7$. Files containing equations of other levels are available upon
request. (The equation for $p=3$ is given in Section 5 of
\cite{Yang-Schwarzian}.)

Write $\Phi_7(x,y)$ as $a_8(x)y^8+\cdots+a_0(x)$. Then
\begin{equation*} \small
\begin{split}
  a_0(x)=x^2(278055625x^2-9948657600x+222563303424)^3,
\end{split}
\end{equation*}
\begin{equation*} \small
\begin{split}
  a_1(x)&=-22595841233117014160156250x^8 \\
  & + 28984697923550260459599062500x^7 \\
  & - 1645302326393970545433073200000x^6 \\
  & + 6642716791333154856889291776000x^5 \\
  & + 602202478597261111037649172561920x^4 \\
  & - 4910763943297233363215784691630080x^3 \\
  & - 8496599535482361705366127525232640x^2 \\
  & + 9192477793078594137923003873230848x,
\end{split}
\end{equation*}
\begin{equation*} \small
\begin{split}
  a_2(x)&=227107188088738387105484375x^8 \\
  & - 10394755051722304542380018750x^7 \\
  & + 2778503946540202482367606147300x^6 \\
  & - 87342791194115920501573603009536x^5 \\
  & + 509955665442187448278395883376640x^4 \\
  & + 4112275508763979970115841213071360x^3 \\
  & - 2548787139523595922308471728373760x^2 \\
  & - 8496599535482361705366127525232640x \\
  & + 11024545045545306955620945204609024,
\end{split}
\end{equation*}
\begin{equation*} \small
\begin{split}
  a_3(x)&=-150467239701697875855937500x^8 \\
  & - 66608231302181029579976003090x^7 \\
  & + 1779457212256262244783949440310x^6 \\
  & + 2537754854567828839953906813980x^5 \\
  & - 567333242539975544914354048540992x^4 \\
  & - 239926105649803444132294072811520x^3 \\
  & + 4112275508763979970115841213071360x^2 \\
  & - 4910763943297233363215784691630080x \\
  & - 1478403072292996522939287129292800,
\end{split}
\end{equation*}
\begin{equation*} \small
\begin{split}
  a_4(x)&=591743143857926286378268151x^8 \\
  & + 3202866389133529516632265220x^7 \\
  & - 354063672667325366031194372695x^6 \\
  & + 22584679924916438377343486492770x^5 \\
  & + 91353706850814722447426682881030x^4 \\
  & - 567333242539975544914354048540992x^3 \\
  & + 509955665442187448278395883376640x^2 \\
  & + 602202478597261111037649172561920x \\
  & + 107405110735755767009079459840000,
\end{split}
\end{equation*}
\begin{equation*} \small
\begin{split}
  a_5(x)&=-178678323923978351074342650x^8 \\
  & + 71993226806139685483910592x^7 \\
  & - 382859191369766872635900582760x^6 \\
  & - 3892236690880311638995825770460x^5 \\
  & + 22584679924916438377343486492770x^4 \\
  & + 2537754854567828839953906813980x^3 \\
  & - 87342791194115920501573603009536x^2 \\
  & + 6642716791333154856889291776000x \\
  & - 4678710992156906294833152000000,
\end{split}
\end{equation*}
\begin{equation*} \small
\begin{split}
  a_6(x)&=38853800265463031036085625x^8 \\
  & + 2535839489102333226932059850x^7 \\
  & + 55519696363179781200215763218x^6 \\
  & - 382859191369766872635900582760x^5 \\
  & - 354063672667325366031194372695x^4 \\
  & + 1779457212256262244783949440310x^3 \\
  & + 2778503946540202482367606147300x^2 \\
  & - 1645302326393970545433073200000x \\
  & + 134184722882776668446400000000,
\end{split}
\end{equation*}
\begin{equation*} \small
\begin{split}
  a_7(x)&=-3904463975228746224750000x^8 \\
  & - 279343739152597734767051250x^7 \\
  & + 2535839489102333226932059850x^6 \\
  & + 71993226806139685483910592x^5 \\
  & + 3202866389133529516632265220x^4 \\
  & - 66608231302181029579976003090x^3 \\
  & - 10394755051722304542380018750x^2 \\
  & + 28984697923550260459599062500x \\
  & - 2307539315546608933125000000,
\end{split}
\end{equation*}
\begin{equation*} \small
\begin{split}
  a_8(x)&=(528806915000x^4-3691767131325x^3+23850535042201x^2\\
  &\qquad\qquad-2436693984375x+4636577546875)^2.
\end{split}
\end{equation*}

Here we give a short table of irrational singular moduli obtained by
factorizing $\Phi_p(x,x)$ for $p\le 23$. Note that the norms of these
singular moduli were already computed in \cite{Errthum} using
Borcherds forms. (All the rational singular moduli for
$X_0^{10}(1)/W_{10}$ were numerically determined in \cite{Elkies} and
later verified in \cite{Errthum}.)


$$ \extrarowheight3pt \tiny
\begin{array}{c|l|l} \hline\hline
d & \text{coordinates} & \text{norms} \\ \hline
-68 & 25x^2 - 164x + 500 & 2^2 5^1 \\
-260& 31213x^2 - 7252x + 19652 & 2^2 17^3/7^4 13^1 \\
-360& 3844x^2 + 16299x + 132651 & 3^3 17^3/2^2 31^2 \\
-152& 1250x^3 - 62475x^2 + 64048x - 166375 & 11^3/2^1 5^1 \\
-132& 4225x^2 - 27108x + 364500 & 2^2 3^6 5^1/13^2 \\
-168& 122500x^2 - 647325x + 970299 & 3^6 11^3/2^2 5^4 7^2 \\
-195& 142129x^2 + 12489984x - 2985984 & 2^{12} 3^6/13^2 29^2 \\
-155& 74431x^2 - 1204224x - 5451776 & 2^{12} 11^3/7^4 31^1 \\
-440& 114244x^3 - 13732264x^2 + 20581253x - 16194277 & 11^3 23^3/2^2 13^4 \\
-83 & 105625x^3 - 2894400x^2 + 30224384x + 32768000
    & 2^{18}/5^1 13^2 \\
-420& 67081x^2 + 431676x + 71118324 & 2^2 3^6 29^3/7^2 37^2\\
-580& 13766909x^2 - 22395636x + 200973636 & 2^2 3^6 41^3/13^2 29^1 53^2 \\
-315& 72812089x^2 + 88682944x + 398688256 & 2^{15} 23^1/7^2 53^2 \\
-820& 3158777209x^2 - 19708111836x - 598885164
    & 2^2 3^6 59^3/7^4 13^4 37^2 \\
-355& 253887551x^2 - 6128728704x + 3974344704
    & 2^{12} 3^6 11^3/31^2 61^2 71 \\
-660& 4726150009x^2 - 10741876236x + 8174192436
    & 2^2 3^9 47^3/7^4 23^2 61^2 \\
-680& 995781136x^3 - 8234027816x^2 + 166339878257x - 450688409963
    & 11^3 17^3 41^3/2^4 7^6 23^2\\
-435& 4460503369x^2 - 298408090176x + 25350000869376
    & 2^{18} 3^9 17^3/7^4 29^2 47^2 \\
-920& \substack{287478606136x^5 - 3389457456860x^4 - 4652502434310x^3\\
    \quad+ 216562501495x^2 + 50275785584380x - 60944362239977}
    & 23^2 59^3/2^3 29^1 47^2 \\
\hline\hline
\end{array}
$$
\end{section}

\end{document}